\title{Characterizing Biautomatic Groups}
\date{March 2021}
\author {Aischa Amrhein}
\theoremstyle{definition}
\newtheorem{de}{Definition}
\newtheorem{thm}[de]{Theorem}
\newtheorem{lem}[de]{Lemma}
\newtheorem{ex}[de]{Example}
\theoremstyle{remark}
\newtheorem*{con}{Convention}
\newcommand{\Z}{\mathbin{Z}}
\begin{document}
\maketitle
This paper corrects the characterisation of biautomatic groups presented in Lemma 2.5.5 in the book \textit{Word Processing in Groups} by Epstein et al.  \cite{wordproc}. We present a counterexample to the lemma, and we reformulate the lemma to give a valid characterisation of biautomatic groups. \newline

We use the same notation and core definitions for automatic and biautomatic groups as the book, most importantly we use the following conventions.
\begin{con}
Let $A$ be an alphabet generating a group $G$ as a semi-group and let $w \in A^*$ be a word. Then $\pi(w) \in G$ is the group element represented by the word $w$, and $\widehat{w}:[0, \infty) \to \Gamma(G,A)$ is the path in the Cayley graph that travels at speed one from the neutral element to $\pi(w)$, where it stays for good. The (synchronous, uniform) distance between two paths $\widehat{w_1}$ and $\widehat{w_2}$ is given by the maximum of the point-wise distance at any point in time.
\end{con}

The following definitions correspond to Theorem 2.3.5 and Definition 2.5.4 in \cite{wordproc}.
\begin{de}[Automatic Structure]
\label{de: automatic}
Let $A$ be a finite set generating the group $G$ as a semigroup, and let $L$ be a regular language over $A$ that maps onto $G$. Then $(A,L)$ is an automatic structure if there exists some constant $k$ such that for all  $w_1,w_2 \in L$ and for all $a \in A \cup \{\varepsilon\}$ fulfilling $\pi(w_1a) = \pi(w_2)$ it holds that the distance between $\widehat{w_1a}$ and $\widehat{w_2}$ is less than or equal to $k$.
\end{de}
\begin{de}[Biautomatic Structure]
\label{de: biautomatic}
Let $G$ be a group, $A$ an alphabet closed under inversion and $L \subset A^*$ a language that maps onto $G$. Then $(A,L)$ is a biautomatic structure if both $(A,L)$ and $(A,L^{-1})$ are automatic structures.
\end{de}

The following statement is Lemma 2.5.5. in \cite{wordproc}, it aims to characterize biautomatic groups.
\begin{lem}
\label{lem: wrong}
Let $G$ be a group and let $A$ be a finite set of semi-group generators closed under inversion. Let $L$ be a regular language over $A$ that maps onto $G$. Then $L$ is biautomatic if and only if, for each $w_1,w_2 \in L$ and for each $a,b \in A \cup \{\varepsilon\}$ fulfilling $\pi(aw_1b) = \pi(w_2)$ it holds that the maximum distance between the paths $\widehat{aw_1b}$ and $\widehat{w_2}$ is bounded by some fixed constant $k$. 
\end{lem}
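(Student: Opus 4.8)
The plan is to prove the two implications of the biconditional separately. Throughout I would use that the metric on $\Gamma(G,A)$ is left-invariant, together with the elementary fact that $\widehat{aw_1b}$ and the left-translate $a\cdot\widehat{w_1b}$ stay within distance one of each other at every time, so that the condition may equivalently be stated in terms of $a\cdot\widehat{w_1b}$; I will also use the identity $\widehat{u^{-1}}(t)=\pi(u)^{-1}\,\widehat{u}(|u|-t)$ (and its variant with a trailing letter) relating the path of a word to that of its formal inverse.

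For the forward implication, assume $(A,L)$ is biautomatic and fix $w_1,w_2\in L$ and $a,b\in A\cup\{\varepsilon\}$ with $\pi(aw_1b)=\pi(w_2)$. Since $L$ maps onto $G$, pick $w_3\in L$ with $\pi(w_3)=\pi(w_1b)$; Definition~\ref{de: automatic} applied to $w_1,w_3$ and the letter $b$ bounds the distance between $\widehat{w_1b}$ and $\widehat{w_3}$, hence between $a\cdot\widehat{w_1b}$ and $a\cdot\widehat{w_3}$. Next, $w_3^{-1},w_2^{-1}\in L^{-1}$ with $\pi(w_3^{-1}a^{-1})=\pi(w_2^{-1})$, so automaticity of $(A,L^{-1})$ bounds the distance between $\widehat{w_3^{-1}a^{-1}}$ and $\widehat{w_2^{-1}}$; transporting this estimate through the inverse-path identity and left-invariance turns it into a bound between $a\cdot\widehat{w_3}$ and $\widehat{w_2}$, and chaining the three estimates produces the constant $k$. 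The delicate point here is that the inverse-path identity matches the two sides only up to a time-shift equal to the length difference $|w_2|-|w_3|$; I would absorb it either by passing to the word-difference automata of the two automatic structures (the machine-theoretic route, as in Theorem~2.3.5 of \cite{wordproc}), or by using automaticity of $(A,L)$ to tether both paths to a common short representative of $\pi(w_2)$, which confines all sufficiently late prefixes to a bounded neighbourhood of the common endpoint.

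For the converse, assume the two-sided bounded-distance condition. Setting $a=\varepsilon$ recovers exactly the defining property of an automatic structure, so $(A,L)$ is automatic; and since $A$ is closed under inversion, $L^{-1}$ is regular and maps onto $G$. It remains to verify the fellow-traveller property for $(A,L^{-1})$: given $v_1,v_2\in L^{-1}$ and $c\in A\cup\{\varepsilon\}$ with $\pi(v_1c)=\pi(v_2)$, write $v_i=w_i^{-1}$ with $w_i\in L$, observe that $\pi(v_1c)=\pi(v_2)$ is equivalent to $\pi(c^{-1}w_1)=\pi(w_2)$, apply the hypothesis with prefix letter $c^{-1}$ and empty suffix to bound the distance between $\widehat{c^{-1}w_1}$ and $\widehat{w_2}$, and then convert this into a bound between $\widehat{v_1c}=\widehat{w_1^{-1}c}$ and $\widehat{v_2}=\widehat{w_2^{-1}}$.

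This last conversion is the step I expect to be the main obstacle — and, since the paper announces a counterexample, presumably the point at which the ``if'' direction fails. Unlike in the forward implication, here one cannot tether the two sides to a common short representative: $\widehat{w_1^{-1}c}$ and $\widehat{w_2^{-1}}$ are obtained from $\widehat{c^{-1}w_1}$ and $\widehat{w_2}$ by reversing the time parameter and left-translating by the (shared) endpoint, and a synchronous bound between two forward paths need not survive this reversal-plus-translation — the time-shift $|w_2|-|w_1|$ reappears with nothing in the hypothesis to absorb it, and on the portions where $w_1$ or $w_2$ strays far from geodesic the resulting estimate grows with word length rather than remaining an absolute constant. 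This ultimately reflects that $g\mapsto g^{-1}$ is not Lipschitz for the left-invariant word metric on a non-abelian group. I therefore expect the hypothesis alone not to force the $(A,L^{-1})$-fellow-traveller property, and would anticipate that a correct characterisation must additionally impose some condition controlling this reversed comparison.
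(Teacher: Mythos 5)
You were right to stop where you stopped: the lemma as stated is false, and the paper's treatment of it is not a proof but a refutation. The obstruction you isolate in the ``if'' direction --- that a synchronous bound between $\widehat{c^{-1}w_1}$ and $\widehat{w_2}$ need not survive time-reversal because the time-shift $|w_2|-|w_1|$ has nothing in the hypothesis to absorb it --- is exactly the difficulty the paper points to (``the two inverted paths\dots may be badly synchronized''), and the paper realizes it concretely: in $\Z^2$ with $L=\{x^my^n\}(xyx^{-1}y^{-1})^*$, words representing the same element can differ in length by arbitrarily much because of the appended null-homotopic loops, so $\widehat{(yxy^{-1}x^{-1})^nx^n}$ and $\widehat{x^n}$ drift $n$ apart after reversal even though the forward paths $4$-fellow-travel. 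One correction to your diagnosis, though: the failure does \emph{not} ``ultimately reflect that $g\mapsto g^{-1}$ is not Lipschitz on a non-abelian group.'' The counterexample lives in $\Z^2$, where inversion is an isometry; what breaks is purely the desynchronization caused by unbounded length differences among $L$-representatives of a single group element, i.e.\ by $\pi\colon L\to G$ having unbounded fibres. That is precisely the hypothesis the paper adds to repair the statement: Theorem~\ref{thm: characterizing finite-to-one biautomatic} assumes $\pi$ is finite-to-one, and Lemma~\ref{lem: bounded length difference} (a pumping argument on the accepting automaton) then bounds $\bigl||w_1|-|w_2|\bigr|$ by a constant $N$, which is exactly what absorbs the time-shift when the paths are reversed.

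A second, smaller point: the same issue infects your \emph{forward} direction. Your proposed repairs there --- word-difference machines, or tethering to a common short representative --- also quietly require a bound on the length discrepancy between fellow-travelling words, and without finite-to-one fibres no such bound is available (the paper's proof of the ``only if'' half of Theorem~\ref{thm: characterizing finite-to-one biautomatic} likewise invokes Lemma~\ref{lem: bounded length difference} to reverse the comparison of $\widehat{u}$ with $\widehat{aw_1}$). So the honest conclusion is that \emph{neither} direction of Lemma~\ref{lem: wrong} admits the proof sketched in \cite{wordproc} without the finite-to-one assumption, and your instinct that ``a correct characterisation must additionally impose some condition controlling this reversed comparison'' is confirmed by the paper, with the condition being precisely that $\pi\colon L\to G$ is finite-to-one.
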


The book \cite{wordproc} provides a proof sketch for this lemma: first it verifies that $L^{-1}$ is regular if $L$ is, then it claims that the result follows from Definition \ref{de: automatic} (Automatic Structure). However, a difficulty arises when one tries to bound the distance between two inverted paths in the Cayley graph, as they may be badly synchronized. The following is a counterexample to Lemma \ref{lem: wrong}.

\begin{ex}
Let $G = \Z^2$ and consider the structure $(A,L)$ where $A = \{x,y,x^{-1},y^{-1}\}$ and $L = \{x^my^n | m,n \in \Z\}(xyx^{-1}y^{-1})^*$. We show that this structure is automatic but not biautomatic. \\
\begin{figure}[ht]
    \centering
    \includegraphics[width = 0.75\textwidth]{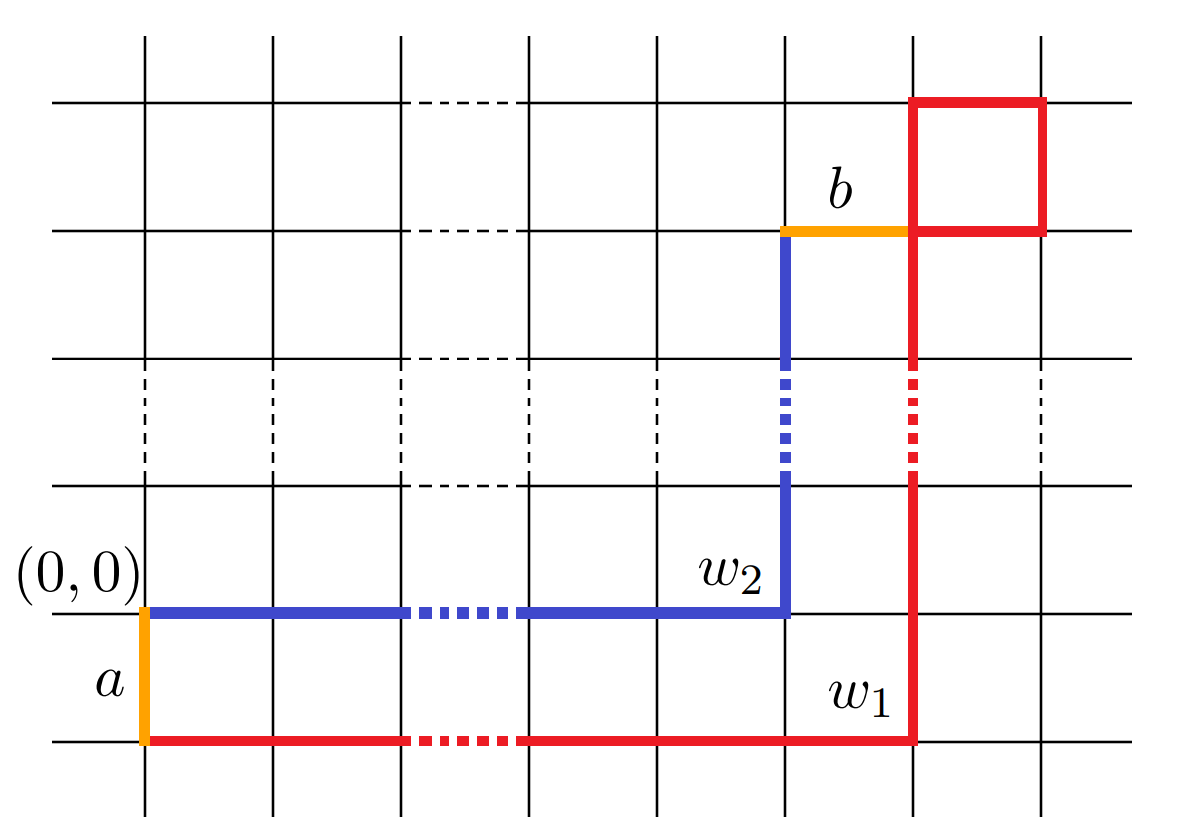}
   \caption{The blue path is generated by $w_2 = x^m y^n$, the orange/red path by $a w_1 b$ where $a = y^{-1}$ (in orange), $w_1 = x^{m+1}y^{n+1}(xyx^{-1}y^{-1})^k$ (in red), and $b = x^{-1}$ (in orange). They fulfill $\pi(aw_1b) = \pi(w_2)$.}
    \label{fig: Counterexample}
\end{figure}

For some word $w \in L$ the path $\widehat{w}$ looks as follows: first it travels right or left, then up or down, and finally, it performs an amount of loops with the basepoint where it ends. This language does not fulfill the uniqueness property. However, $\pi(w)$ determines the first part of the path, the only variation left is the amount of loops the path performs. A loop $xyx^{-1}y^{-1}$ has a uniform distance of $2$ to its base point. This implies that for $w_1,w_2 \in L$ and $a \in A \cup \{\varepsilon\}$ fulfilling $\pi(w_1a) = \pi(w_2)$ the uniform distance between $\widehat{w_1a}$ and $\widehat{w_2}$ is at most $3$. Therefore, $(A,L)$ is an automatic structure. \\
The inverse of the language $L$ is given by $L^{-1} = (yxy^{-1}x^{-1})^*L$. Let $w_1 = (yxy^{-1}x^{-1})^nx^n$ and $w_2 = x^n$. Clearly, $\pi(w_1) = \pi(w_2)$. However, the maximum distance between the two paths is greater than $n$, thus $L$ is not biautomatic. \\
We claim that this structure fulfills the characterization in Lemma \ref{lem: wrong}. Let $a,b \in A \cup \{\varepsilon\}$ and $w_1,w_2 \in L$ be such that $\pi(aw_1b) = \pi(w_2)$. Then the distance between the paths $\widehat{aw_1b}$ and $\widehat{w_2}$ is at most $4$, by the same argument that we used when showing that $(A,L)$ is automatic, and using that $G$ is abelian. Figure \ref{fig: Counterexample} shows the situation.
\end{ex}
A simpler example (though visually less representable), is given by $G = \Z$ and $L = \{x^n|n \in \Z\}(xx^{-1})^*$. Again, the paths vary at their endpoint over a bounded distance but during an unbounded amount of time, leaving the reversal of the paths an unbounded distance apart. \\

It seems that after the appearance of the preprint version of \cite{wordproc}, the two-sided fellow traveller condition occurring in Lemma \ref{lem: wrong} was soon adopted as the standard definition of biautomaticity (see \cite{GS1} and \cite{GS2}) and was subsequently used in most publications pertaining to the subject. The question now arises whether the original definition should be regarded as non-standard, or out-dated.

On the other hand, as was pointed out to the author by Ian Leary and Ashot Minasyan, the characterization in Lemma~3 works with the additional condition that the map $\pi \colon L \to G$ is finite-to-one. This condition is not restrictive and occurs also in other contexts; see, for example, Theorem 2.5.1 and Theorem 3.3.4 in \cite{wordproc}. This modified characterization relies on the following lemma, which resembles Lemma 2.3.9 in the book.

\begin{lem}
\label{lem: bounded length difference}
Let $A$ be a finite set generating the group $G$ as a semigroup, and let $L$ be a regular language over $A$ such that every group element is represented by at most finitely many words in $L$. Then for every $k$ there is a constant $N$ such that whenever $a \in A \cup \{\epsilon\}$ and $w_1,w_2 \in L$ are such that the distance between the paths $\widehat{aw_1}$ and $\widehat{w_2}$ is at most $k$, the lengths of $w_1$ and $w_2$ differ by at most $N$.
\end{lem}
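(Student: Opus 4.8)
The plan is to run a pumping argument in a finite automaton for $L$ while simultaneously tracking the position reached in the Cayley graph, and to use the finite-to-one hypothesis to turn an over-long word into a contradiction. Fix $k$. Choose a deterministic finite automaton $\mathcal{M}$ recognizing $L$, with state set of size $S$, and let $\mathcal{B} \subseteq \Gamma(G,A)$ be the set of group elements at distance at most $2k$ from the identity; since $A$ is finite, $\mathcal{B}$ is finite, with $|\mathcal{B}| \le 1 + |A| + \cdots + |A|^{2k}$. I claim $N := S\,|\mathcal{B}| + 1$ works (in the few degenerate cases below the length difference will in fact be at most $1$).

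First I would extract the geometric content of the hypothesis. Suppose $a \in A \cup \{\epsilon\}$ and $w_1,w_2 \in L$ satisfy that the distance between $\widehat{aw_1}$ and $\widehat{w_2}$ is at most $k$, and write $g := \pi(aw_1) = \pi(w_2)$. Consider the case $|aw_1| \ge |w_2|$ and, temporarily, $|w_2| \ge |a|$. For each integer $t$ with $|w_2| \le t \le |aw_1|$ the path $\widehat{w_2}$ has already come to rest at $g$, so the length-$t$ prefix of $aw_1$ represents an element within distance $k$ of $g$. Writing $w_1 = pq$ with $|p| = |w_2| - |a|$, these prefixes are precisely $a\,p\,q[1..i]$, $0 \le i \le |q|$; left-multiplying by $\pi(ap)^{-1}$, which is an isometry of $\Gamma(G,A)$, the condition becomes $d(\pi(q[1..i]),\pi(q)) \le k$ for all $i$, and in particular (taking $i = 0$) $d(1,\pi(q)) \le k$. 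Hence \emph{every prefix of $q$ represents an element of $\mathcal{B}$}. The case $|aw_1| < |w_2|$ is symmetric: it produces a suffix $q'$ of $w_2$ every prefix of which lies in $\mathcal{B}$. The remaining possibility $|w_2| < |a|$ forces $w_2 = \epsilon$, $g = 1$, and the whole of $\widehat{w_1}$ confined to a ball of radius $k+1$ about the identity; I would treat this (and symmetric degeneracies) by the same pumping argument applied directly to $w_1$, and the bound obtained there is absorbed into $N$.

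Now the pumping step, carried out on the word in $L$ — namely $w_1$ (or $w_2$) — from which the tail $q$ (or $q'$) was cut, so that the words produced remain in $L$. Say $w_1 = pq \in L$ with every prefix of $q$ in $\mathcal{B}$. Let $r_0, r_1, \ldots, r_{|q|}$ be the states of $\mathcal{M}$ along the unique run on $q$ that begins in the state reached after reading $p$; then $r_{|q|}$ is accepting. Consider the pairs $\big(r_i, \pi(q[1..i])\big)$ for $0 \le i \le |q|$, each lying in a set of size at most $S\,|\mathcal{B}|$. If $|q| \ge S\,|\mathcal{B}|$, two of these pairs agree, say at indices $i < j$; then the factor $q[i+1..j]$ is a loop in $\mathcal{M}$ and represents the identity of $G$ (its image is $\pi(q[1..i])^{-1}\pi(q[1..j]) = 1$). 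Consequently, for every $t \ge 0$ the word $p\,q[1..i]\,(q[i+1..j])^{t}\,q[j+1..|q|]$ lies in $L$ and represents $\pi(w_1)$; these words have pairwise distinct lengths, so there are infinitely many of them, contradicting the hypothesis that $\pi$ is finite-to-one on $L$. Hence $|q| < S\,|\mathcal{B}|$, and since $w_1 = pq$ with $|p| = |w_2| - |a|$ we get $|w_1| - |w_2| = |q| - |a| < N$, while $|w_2| - |w_1| = |a| - |q| \le 1 \le N$. The symmetric case gives $|w_2| - |w_1| = |a| + |q'| < S\,|\mathcal{B}| + 1 = N$. In all cases the lengths of $w_1$ and $w_2$ differ by at most $N$.

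I expect the main obstacle to be purely organizational: one must make sure the pumping is performed on a word that genuinely lies in $L$ (which is why it is done on $w_1$ or $w_2$, not on $aw_1$), and one must dispose cleanly of the boundary cases introduced by the single extra letter $a$ — chiefly when the splitting point $\min(|aw_1|,|w_2|)$ falls inside $a$. The only substantive idea is that left translation is an isometry of the Cayley graph, which is precisely what lets one replace ``the tail of the longer path stays near the common endpoint $g$'' by ``the tail word stays in a fixed finite ball around the identity'', thereby making the two-coordinate pigeonhole available and bringing the finite-to-one hypothesis into play.
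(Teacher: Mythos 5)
Your proposal is correct and follows essentially the same route as the paper's own proof: the tail of the longer path is trapped in a fixed finite ball, a pigeonhole on pairs (automaton state, vertex) produces a loop reading the identity, and pumping that loop contradicts the finite-to-one hypothesis. Your write-up is in fact somewhat more careful than the paper's, which dismisses the case $l_2 < l_1$ as ``analogous'' without addressing your (correctly identified) point that the pumping must be performed inside $w_1$ rather than $aw_1$.
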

\begin{proof}
Let $l_1$ and $l_2$ be the lengths of $aw_1$ and $w_2$, respectively, and let $n$ be the number of states in an automaton accepting $L$. Suppose first that $l_1 < l_2$. The path $\widehat{w_2}|_{[l_1,l_2]}$ remains in the closed ball of radius $k$ about the point $\widehat{aw_1}(l_1)$. If this path visits a vertex in the ball more than $n$ times, then there exist two distinct integers $t,t' \in [l_1,l_2]$ such that $\widehat{w_2}(t) = \widehat{w_2}(t')$ and the automaton is at the same state after reading the prefixes $w_2(t)$ and $w_2(t')$. By going over the loop $\widehat{w_2}|_{[t,t']}$ repeatedly, one would get arbitrarily many representatives of the same group element. It follows that $l_2 - l_1$ is less than $n$ times the number of vertices in the $k$-ball. The case $l_2 < l_1$ is analogous.
\end{proof}
Using the lemma, we can give a characterisation of biautomatic groups.
\begin{thm}
\label{thm: characterizing finite-to-one biautomatic}
Let $G$ be a group and let $A$ be a finite set of generators closed under inversion.
Let $L$ be a regular language over $A$ such that $\pi:L \to G$ is finite-to-one and onto. Then $(A,L)$ is a biautomatic structure if and only if there exists some $k$ such that for all $w_1,w_2 \in L$ and for all $a,b \in A\cup \{\varepsilon\}$ fulfilling $\pi(aw_1b) = \pi(w_2)$ it holds that the distance between the paths $\widehat{aw_1b}$ and $\widehat{w_2}$ is less than or equal to $k$.
\end{thm}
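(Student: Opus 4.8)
The plan is to prove both implications by reducing them, through two elementary transformations of the paths $\widehat{\,\cdot\,}$, to the one-sided fellow-traveller bound built into Definition~\ref{de: automatic}, and then to repair the parametrisation mismatch that these transformations introduce by means of Lemma~\ref{lem: bounded length difference}. The first transformation is left-translation invariance: for $a\in A\cup\{\varepsilon\}$ and words $v,v'$ one has $d(\widehat{av},\widehat{av'})=d(\widehat v,\widehat{v'})$, since on $[0,1]$ both paths traverse the edge from $e$ to $\pi(a)$, for $t\ge 1$ they equal $\pi(a)\cdot\widehat v(t-1)$ and $\pi(a)\cdot\widehat{v'}(t-1)$, and the Cayley-graph metric is left-invariant. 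The second is the \emph{reversal identity}: writing $\ell(z)$ for the length of a word $z$ (so $\ell(z^{-1})=\ell(z)$, as $A$ is closed under inversion), one has $\widehat{z^{-1}}(t)=\pi(z)^{-1}\widehat z(\ell(z)-t)$ for $t\in[0,\ell(z)]$ and $\widehat{z^{-1}}(t)=\pi(z)^{-1}$ for $t\ge\ell(z)$. Hence if $\pi(u)=\pi(v)$ then for every $t\le\min\{\ell(u),\ell(v)\}$
\[
d\bigl(\widehat{u^{-1}}(t),\widehat{v^{-1}}(t)\bigr)=d\bigl(\widehat u(\ell(u)-t),\,\widehat v(\ell(v)-t)\bigr),
\]
so a fellow-traveller bound for $\widehat{u^{-1}}$ and $\widehat{v^{-1}}$ is equivalent to one for $\widehat u$ and $\widehat v$ read at the \emph{shifted} times $\ell(u)-t$ and $\ell(v)-t$. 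Absorbing this shift is the crux of the proof and the only place the finiteness of the fibres of $\pi$ is needed: Lemma~\ref{lem: bounded length difference} bounds $|\ell(u)-\ell(v)|$ in terms of the fellow-traveller constant, and then the speed-$\le 1$ estimate $d(\widehat v(s),\widehat v(s'))\le|s-s'|$, together with routine bookkeeping for times near the ends of the two paths, turns the shifted bound into an unshifted one at the cost of an additive constant.

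For the ``if'' direction, assume the two-sided bound holds with constant $k$. That $(A,L)$ is automatic is the hypothesis with the left letter equal to $\varepsilon$. For $(A,L^{-1})$ --- which is regular and maps onto $G$ --- let $v_1,v_2\in L^{-1}$ and $a\in A\cup\{\varepsilon\}$ satisfy $\pi(v_1a)=\pi(v_2)$, and write $v_i=w_i^{-1}$ with $w_i\in L$, so that $\pi(a^{-1}w_1)=\pi(w_2)$. The hypothesis, with right letter $\varepsilon$, gives $d(\widehat{a^{-1}w_1},\widehat{w_2})\le k$; Lemma~\ref{lem: bounded length difference} applied to $(A,L)$ supplies $N=N(k)$ with $|\ell(w_1)-\ell(w_2)|\le N$, hence $|\ell(w_1^{-1}a)-\ell(w_2^{-1})|\le N+1$. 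Applying the reversal identity with $u=a^{-1}w_1$ and $v=w_2$ and re-synchronising as above bounds $d(\widehat{w_1^{-1}a},\widehat{w_2^{-1}})=d(\widehat{v_1a},\widehat{v_2})$ by a constant depending only on $k$. Thus $(A,L^{-1})$ is automatic and $(A,L)$ is biautomatic.

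For the ``only if'' direction, assume $(A,L)$ and $(A,L^{-1})$ are automatic, with a common constant $k$, and let $w_1,w_2\in L$ and $a,b\in A\cup\{\varepsilon\}$ satisfy $\pi(aw_1b)=\pi(w_2)$. Using that $\pi$ is onto, choose $w_3\in L$ with $\pi(w_3)=\pi(w_1b)$. Since $\pi(w_1b)=\pi(w_3)$ with $w_1,w_3\in L$, automaticity of $(A,L)$ gives $d(\widehat{w_1b},\widehat{w_3})\le k$, and left-translation invariance then gives $d(\widehat{aw_1b},\widehat{aw_3})\le k$; it remains to bound $d(\widehat{aw_3},\widehat{w_2})$. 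From $\pi(aw_3)=\pi(aw_1b)=\pi(w_2)$ we get $\pi(w_3^{-1}a^{-1})=\pi(w_2^{-1})$ with $w_3^{-1},w_2^{-1}\in L^{-1}$, so automaticity of $(A,L^{-1})$ gives $d(\widehat{w_3^{-1}a^{-1}},\widehat{w_2^{-1}})\le k$, whence $d(\widehat{w_3^{-1}},\widehat{w_2^{-1}})\le k+1$. As $L^{-1}$ is regular and $\pi$ is finite-to-one on it (its fibres over $L^{-1}$ are the images under $w\mapsto w^{-1}$ of its fibres over $L$), Lemma~\ref{lem: bounded length difference} applied to $(A,L^{-1})$ gives $N'=N'(k)$ with $|\ell(w_3^{-1})-\ell(w_2^{-1})|\le N'$, hence $|\ell(aw_3)-\ell(w_2)|\le N'+1$. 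Feeding $u=aw_3$, $v=w_2$ into the reversal identity and re-synchronising converts $d(\widehat{w_3^{-1}a^{-1}},\widehat{w_2^{-1}})\le k$ into a bound on $d(\widehat{aw_3},\widehat{w_2})$ depending only on $k$, and the triangle inequality then bounds $d(\widehat{aw_1b},\widehat{w_2})$ by a constant independent of $w_1,w_2,a,b$. The main obstacle is precisely the parametrisation shift highlighted above --- the point where the original proof of Lemma~\ref{lem: wrong} breaks down, and where, as the counterexample above shows, the statement itself fails without the finite-to-one hypothesis --- and it is exactly what the finite-to-one hypothesis, through Lemma~\ref{lem: bounded length difference}, is there to overcome.
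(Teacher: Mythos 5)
Your proof is correct and follows essentially the same route as the paper's: reduce each direction to the one-sided conditions via an intermediate word of $L$ representing a partial product, then use Lemma~\ref{lem: bounded length difference} to control the length discrepancy when reversing paths. The only differences are cosmetic --- you split off $b$ first (via $w_3$ with $\pi(w_3)=\pi(w_1b)$) where the paper splits off $a$ first, and you spell out the reversal identity, the left-translation invariance, and the verification that $L^{-1}$ is regular and finite-to-one, all of which the paper leaves implicit.
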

\begin{proof}
First we show that a structure $(A,L)$ satisfying the criterion in the theorem fulfills the conditions of Definition \ref{de: biautomatic}. Let $w_1,w_2 \in L$ and $a \in A\cup\{\varepsilon\}$ fulfill $\pi(w_1a) = \pi(w_2)$. By assumption, this means that the distance between $\widehat{w_1a}$ and $\widehat{w_2}$ is less than or equal to $k$, thus $(A,L)$ is an automatic structure. Now assume that $\pi(w_1^{-1}a) = \pi(w_2^{-1})$, or equivalently $\pi(a^{-1}w_1) = \pi(w_2)$. By assumption, the distance between the paths $\widehat{a^{-1}w_1}$ and $\widehat{w_2}$ is bounded by some $k$. Using Lemma \ref{lem: bounded length difference}, we obtain that the length difference between $a^{-1}w_1$ and $w_2$ is bounded by a constant $N$. Reversing the paths gives that the distance between $\widehat{w_1^{-1}a}$ and $\widehat{w_2^{-1}}$ is bounded by $N+k$. Since $L^{-1}$ is also regular, $(A,L^{-1})$ is an automatic structure. \newline
Conversely, assume $(A,L)$ is a biautomatic structure and we have $w_1,w_2 \in L$ with $\pi(aw_1b) = \pi(w_2)$ for some $a,b \in A\cup\{\varepsilon\}$. Let $u \in L$ be such that $\pi(u) = \pi(aw_1)$, so $\pi(ub) = \pi(w_2)$. By assumption, the distance between $\widehat{u}$ and $\widehat{w_2}$ is bounded by some $k$, the same holds if we add $b$ to the first path. It holds that $\pi(u^{-1}) = \pi(w_1^{-1}a^{-1})$, therefore, by the same argument as before, the distance between the paths $\widehat{u^{-1}}$ and $\widehat{w_1^{-1}a^{-1}}$ is bounded by $k$. By Lemma \ref{lem: bounded length difference}, the difference in length between the paths is bounded by some $N$. Reversing the paths and accounting for the difference in length gives that the paths $\widehat{u}$ and $\widehat{aw_1}$ are at most $k+N$ apart. In conclusion, the paths $\widehat{aw_1b}$ and $\widehat{w_2}$ have a maximum distance of at most $N+2k+1$. 
\end{proof}

This work is an excerpt of \cite{introd}.
I would like to extend my sincere gratitude to Professor Urs Lang for his encouragement and his support in writing this note.

\bibliographystyle{abbrv}
\bibliography{sample}

\end{document}